\newtheorem{thm}{Theorem}[section]
\newtheorem{cor}[thm]{Corollary}
\newtheorem{lem}[thm]{Lemma}
\theoremstyle{definition}
\newtheorem{defn}[thm]{Definition}
\theoremstyle{remark}
\newtheorem{rem}[thm]{Remark}
\numberwithin{equation}{section}
\newcommand{\R}{\mathbb R}
\newcommand{\p}{\partial}
\newcommand{\comment}[1]{}
\begin{document}

\title{A note on higher regularity boundary Harnack inequality}
\author{D. De Silva}
\address{Department of Mathematics, Barnard College, Columbia University, New York, NY 10027}
\email{\tt  desilva@math.columbia.edu}
\author{O. Savin}
\address{Department of Mathematics, Columbia University, New York, NY 10027}\email{\tt  savin@math.columbia.edu}

\thanks{ D.~D.~ and O.~ S.~  are supported by the ERC starting grant project 2011 EPSILON. D.~D. is supported by NSF grant DMS-1301535. O.~S.~ is supported by NSF grant DMS-1200701.}

\begin{abstract}
We show that the quotient of two positive harmonic functions vanishing on the boundary of a $C^{k,\alpha}$ domain is of class $C^{k,\alpha}$ up to the boundary.
\end{abstract}
\maketitle

\section{Introduction}

In this note we obtain a higher order boundary Harnack inequality for harmonic functions, and more generally, for solutions to linear elliptic equations. 

Let $\Omega$ be a $C^{k,\alpha}$ domain in $\R^n$, $k \ge 1$. Assume for simplicity that
$$\Omega := \{(x', x_n) \in \R^n \ | \ x_n > g(x')\}$$ with $$g: \R^{n-1} \to \R, \quad g \in C^{k,\alpha}, \quad \|g\|_{C^{k,\alpha}} \leq 1, \quad  g(0)=0.$$ 

Our main result is the following.
\begin{thm}\label{main} Let $u>0$ and $v$ be two harmonic functions in $\Omega \cap B_1$ that vanish continuously on $\p \Omega \cap B_1$. Assume $u$ is normalized so that $u\left( e_n/2\right)=1,$ then
\begin{equation}\label{BHI}\left\|\frac v u\right \|_{C^{k,\alpha}(\Omega \cap B_{1/2})} \leq C \|v\|_{L^\infty},\end{equation} with $C$ depending on $n,k,\alpha.$
\end{thm}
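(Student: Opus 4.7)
I would argue via a compactness / improvement-of-flatness iteration at each boundary point, taking the classical $C^{0,\alpha}$ boundary Harnack as the known input. Using that $u \sim d(\cdot,\p\Omega)$ near $\p\Omega$ (Hopf plus boundary Harnack), the $C^{k,\alpha}$ bound on $v/u$ is equivalent to the following pointwise statement: for every $x_0 \in \p\Omega \cap B_{1/2}$ there is a polynomial $P_{x_0}$ of degree $k$, with coefficients bounded by $C\|v\|_\infty$ and depending $C^{0,\alpha}$--continuously on $x_0$, satisfying
\[
\norm{v - P_{x_0} u}_{L^\infty(\Omega \cap B_r(x_0))} \le C \|v\|_\infty \, r^{k+1+\alpha}, \qquad r \le 1/4.
\]

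The core step is an improvement-of-flatness lemma at the origin: there exist $\rho \in (0,1)$ and $C_* > 0$, depending only on $n,k,\alpha$, such that if $\|v\|_\infty \le 1$ and some polynomial $P$ of degree $k$ satisfies $\norm{v - Pu}_{L^\infty(\Omega \cap B_1)} \le \eps$, then there is another polynomial $P'$ of degree $k$ with $\norm{P'-P} \le C_* \eps$ and
\[
\norm{v - P' u}_{L^\infty(\Omega \cap B_\rho)} \le \rho^{k+1+\alpha} \eps.
\]
Iterating this dyadically, while rescaling and renormalizing $u$ at each scale, produces a Cauchy sequence of polynomials whose limit is the desired $P_{x_0}$, with the quantitative bound above.

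I would prove the improvement lemma by contradiction and compactness. Assuming it fails, extract sequences $\Omega_j, u_j, v_j, P_j$ with $\eps_j \to 0$ for which no admissible $P'$ exists. Set $\tilde v_j := (v_j - P_j u_j)/\eps_j$; this is harmonic in $\Omega_j \cap B_1$, vanishes on $\p\Omega_j \cap B_1$, and is bounded by $1$. The classical boundary Harnack, together with the uniform $C^{k,\alpha}$ regularity of $\p\Omega_j$, gives local H\"older bounds up to the boundary, so along a subsequence $\tilde v_j \to v_\infty$ and $u_j \to u_\infty$ locally uniformly, while $\p\Omega_j \to \set{x_n = 0}$. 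A Liouville-type argument identifies $u_\infty = x_n$, while $v_\infty$ is harmonic in $\set{x_n > 0}\cap B_1$, vanishes on the flat part of the boundary, and is bounded. Schwarz odd reflection across $\set{x_n = 0}$ extends $v_\infty$ to a harmonic function on all of $B_1$, hence real-analytic; consequently $v_\infty/x_n$ is smooth up to $\set{x_n = 0}$, and its degree-$k$ Taylor polynomial $Q$ at $0$ yields $|v_\infty - x_n Q| \le C|x|^{k+2}$. Setting $P' := P_j + \eps_j Q$ and choosing $\rho$ small enough that $C\rho^{k+2} < \rho^{k+1+\alpha}/2$ produces the required improvement for $j$ large, a contradiction.

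The main obstacle is the compactness step: one needs uniform H\"older regularity of $\tilde v_j$ up to the boundary, and the limit $v_\infty$ must inherit the vanishing on the limit flat boundary so that odd reflection and the ensuing Liouville/Taylor argument apply. Keeping the polynomials $P_j$ uniformly bounded throughout the iteration, via the geometric decay of the corrections, is a necessary piece of bookkeeping but is standard for this type of scheme.
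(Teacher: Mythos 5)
Your overall strategy---pointwise $C^{k,\alpha}$ estimates at boundary points via an improvement-of-flatness iteration, approximating $v$ by $uP$ with $P$ a degree-$k$ polynomial, and passing to a blow-up limit in the flat half-space where $v_\infty$ is recovered by odd reflection---is exactly the scheme the paper uses. However, there is a genuine gap at the heart of the compactness step.

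You assert that $\tilde v_j := (v_j - P_j u_j)/\eps_j$ is harmonic. It is not: even when $u$ and $v$ are harmonic, $\Delta(uP) = 2\nabla u\cdot\nabla P + u\,\Delta P$ does not vanish for a generic polynomial $P$. Thus $\Delta \tilde v_j = -\Delta(P_j u_j)/\eps_j$, and after rescaling $x \mapsto rx$ and dividing by $\eps = r^{k+1+\alpha}$, this quantity is of order $r^{-(k-1+\alpha)}\,\Delta(uP)(r\cdot)$. If, for instance, the coefficient $a_n$ of $x_n$ in $P$ is not zero, then $\Delta(uP)$ has a nonvanishing constant term $\approx 2a_n$ and the rescaled Laplacian blows up as $r\to 0$; the sequence $\tilde v_j$ has no reason to be precompact or to converge to a harmonic function, and the whole Liouville/reflection/Taylor argument collapses. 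The fix---which is precisely the new idea the paper introduces---is twofold: (a) after an initial dilation, arrange that $\|g\|_{C^{k,\alpha}}$, $\|u - x_n\|_{C^{k,\alpha}}$, etc., are $\delta$-small, so the rescaled source term is controlled by $\delta$; and (b) restrict attention to \emph{approximating} polynomials $P$, namely those whose coefficients satisfy the linear compatibility system forcing $\mathcal L(uP)$ (in the harmonic case, $\Delta(uP)$) to vanish to order $k-1$ at the origin, which in the flat model amounts to $x_n P$ being harmonic. Only under this constraint does $\Delta\tilde v_j$ become uniformly small, making the compactness and the limit harmonic. For $k=1$ the constraint is simply $a_n=0$; for $k\geq 2$ one must also check that the correction $Q$ produced in the limit can be adjusted to a nearby $\bar Q$ so that $\bar P = P + r^{k+\alpha}\bar Q(\cdot/r)$ remains approximating, which requires solving the perturbed linear system \eqref{Q}. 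None of this bookkeeping appears in your proposal, and it is not a routine omission: it is the mechanism that lets the quotient gain a full derivative over the generic $C^{k-1,\alpha}$ bound.
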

We remark that if $v>0$, then the right hand side of \eqref{BHI} can be replaced by $\dfrac v u \left(e_n/2\right)$ as in the classic boundary Harnack inequality.

For a more general statement for solutions to linear elliptic equations, we refer the reader to Section 3.

The classical Schauder estimates imply that $u, v$ are of class $C^{k,\alpha}$ up to the boundary. Using that on $\p \Omega$ we have $u=v=0$ and $u_\nu>0$, one can easily conclude that $v / u$ is of class $C^{k-1,\alpha}$ up to the boundary. 

Theorem \ref{main} states that the quotient of two harmonic functions is in fact one derivative better than the quotient of two arbitrary $C^{k,\alpha}$ functions that vanish on the boundary. To the best of our knowledge the result of Theorem \ref{main} is not known in the literature for $k \ge 1$. The case when $k=0$ is well known as boundary Harnack inequality: the quotient of two positive harmonic functions as above must be $C^\alpha$ up to the boundary if $\p \Omega$ is Lipschitz, or the graph of a H\"older function, see \cite{HW, CFMS, JK, HW, F}.

A direct application of Theorem \ref{main} gives smoothness of $C^{1,\alpha}$ free boundaries in the classical obstacle problem without making use of a hodograph transformation, see \cite{KNS, C}.

\begin{cor} Let $\p \Omega \in C^{1,\alpha}$ and let $u$ solve
$$\Delta u= 1\quad \text{in $\Omega$}, \quad u=0, \ \nabla u=0 \quad \text{on $\p \Omega \cap B_1.$}$$ Assume that $u$ is increasing in the $e_n$ direction. Then $\p \Omega \in C^\infty.$
\end{cor}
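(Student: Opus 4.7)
The plan is to apply Theorem \ref{main} to the first derivatives of $u$ and then bootstrap on the regularity of $\p\Omega$. Since $\Delta u\equiv 1$ in $\Omega$, each partial derivative $u_{x_i}$ is harmonic, and since $\nabla u=0$ on $\p\Omega\cap B_1$, each $u_{x_i}$ vanishes continuously on $\p\Omega\cap B_1$. The monotonicity $u_{x_n}\ge 0$ together with the strong maximum principle (applied to the harmonic function $u_{x_n}$, which is not identically zero since $u\not\equiv 0$) gives $u_{x_n}>0$ in $\Omega\cap B_1$. Thus Theorem \ref{main} is applicable to the pair $(u_{x_n},u_{x_i})$ for $i=1,\dots,n-1$.

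Assume inductively that $\p\Omega\in C^{k,\alpha}$ for some $k\ge 1$. Then after a suitable rescaling Theorem \ref{main} yields
$$h_i:=-\frac{u_{x_i}}{u_{x_n}}\in C^{k,\alpha}\bigl(\ov{\Omega\cap B_{1/2}}\bigr),\qquad i=1,\dots,n-1.$$
The key step is to identify the boundary trace of $h_i$ with $g_{x_i}$. At any boundary point $P\in\p\Omega\cap B_1$ one has $\nabla u(P)=0$; differentiating the identity $u_{x_i}\equiv 0$ on $\p\Omega$ tangentially shows that $D^2u(P)$ annihilates the tangent space $T_P\p\Omega$. Hence $D^2u(P)=\lambda\,\nu(P)\otimes\nu(P)$ for the outer unit normal $\nu(P)$, and taking the trace forces $\lambda=\Delta u=1$. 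Therefore $\p_\nu u_{x_i}(P)=\nu_i(P)$ and L'H\^{o}pital's rule along the normal direction gives
$$h_i(P)\;=\;-\frac{\nu_i(P)}{\nu_n(P)}\;=\;g_{x_i}(P'),$$
using the explicit formula $\nu=(-\nabla g,1)/\sqrt{1+|\nabla g|^2}$.

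Since $h_i$ is $C^{k,\alpha}$ up to the boundary and the graph parametrization $x'\mapsto(x',g(x'))$ is itself $C^{k,\alpha}$, the composition $x'\mapsto h_i(x',g(x'))=g_{x_i}(x')$ is $C^{k,\alpha}$. This lifts $g$ to $C^{k+1,\alpha}$, and the induction propagates from $k$ to $k+1$, producing $\p\Omega\in C^\infty$.

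The main obstacle is the boundary-trace identification $h_i|_{\p\Omega}=g_{x_i}$, which relies on the Hessian rigidity at critical boundary points: because $\nabla u$ vanishes identically along $\p\Omega$, $D^2u$ at such points is forced to be rank one in the normal direction with eigenvalue $1$. Beyond this computation, the argument is a direct iteration of Theorem \ref{main}; the higher-order (rather than merely $C^\alpha$) nature of that theorem is essential, since each step of the bootstrap must yield a full derivative of gain.
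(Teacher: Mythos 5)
Your proposal is correct and follows essentially the same route as the paper's one-line proof: repeatedly apply Theorem \ref{main} to the quotients $u_{x_i}/u_{x_n}$ (the derivatives being harmonic and vanishing on $\partial\Omega$, with $u_{x_n}>0$ by the strong maximum principle) and bootstrap the regularity of $g$ through the boundary-trace identity $g_{x_i}(x')=-\bigl(u_{x_i}/u_{x_n}\bigr)(x',g(x'))$. Two minor points: to invoke $D^2u$ at a boundary point of the a priori only $C^{1,\alpha}$ domain, first note that each $u_{x_j}$ is harmonic and vanishes on $\partial\Omega\cap B_1$, hence is $C^{1,\alpha}$ up to the boundary by Schauder (the same fact the paper uses for $u$), so $D^2u$ extends continuously and your rigidity computation $D^2u=\nu\otimes\nu$ and the L'H\^opital step are legitimate (alternatively one can identify the trace via the level sets $\{u=\epsilon\}$, which are graphs by $u_{x_n}>0$, avoiding second derivatives at $\partial\Omega$ altogether); also your $\nu=(-\nabla g,1)/\sqrt{1+|\nabla g|^2}$ is the inner rather than outer normal for $\Omega=\{x_n>g(x')\}$, but since the ratio $\nu_i/\nu_n$ is orientation-independent the conclusion is unaffected.
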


The corollary follows by repeateadly applying Theorem \ref{main} to the quotient $u_i/u_n$.

Our motivation for the results of this paper comes from the question of higher regularity in thin free boundary problems, which we recently began investigating in \cite{DS}. 

The idea of the proof of Theorem \ref{main} is the following. Let $v$ be a harmonic function vanishing on $\p \Omega$. The pointwise $C^{k+1,\alpha}$ estimate at $0 \in \p \Omega$ is achieved by approximating $v$ with polynomials of the type $x_n P$ with $\deg P=k$.  It turns out that we may use the same approximation if we replace $x_n$ by a given positive harmonic function $u \in C^{k,\alpha}$ that vanishes on $\p \Omega$. Moreover, the regularity of $\p \Omega$ does not play a role since the approximating functions $u \, P$ already vanish on $\p \Omega$. 

In order to fix ideas we treat the case $k=1$ separately in Section 2, and we deal with the general case in Section 3.

\section{The case $k=1$ -- $C^{1,\alpha}$ estimates.}

In this section, we provide the proof of our main Theorem \ref{main} in the case $k=1$. We also extend the result to more general elliptic operators.

Let $\Omega \subset \R^n$ with $\p \Omega \in C^{1,\alpha}$. Precisely,
$$\p \Omega = \{(x', g(x') \ | \ x' \in \R^{n-1}\}, \quad g(0)=0, \quad \nabla_{x'} g(0)=0, \quad \|g\|_{C^{1,\alpha}} \leq 1.$$ Let $u$ be a positive harmonic function in $\Omega \cap B_1$, vanishing continuously on $\p \Omega \cap B_1$. Normalize $u$ so that $u(e_n/2)=1$. Throughout this section, we refer to positive constants depending only on $n,\alpha$ as universal.

\begin{thm}\label{k1} Let v be a harmonic function in $\Omega \cap B_1$ vanishing continuously on $\p \Omega \cap B_1.$ Then,
$$\left\|\frac  v u \right\|_{C^{1,\alpha}(\Omega \cap B_{1/2})} \leq C \|v\|_{L^\infty(\Omega \cap B_1)}$$ with $C$ universal.
\end{thm}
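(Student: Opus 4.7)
The plan is to establish a pointwise $C^{1,\alpha}$ estimate for $v/u$ at each point of $\p\Omega\cap B_{1/2}$; combined with standard interior estimates (using harmonicity of $u,v$), this yields the global bound. Fix $0\in\p\Omega$ with $\nabla_{x'}g(0)=0$ (arranged by rotation). The aim is to find a linear function $L_\infty$ with $|L_\infty|\le C\|v\|_\infty$ such that
$$
\|v-uL_\infty\|_{L^\infty(\Omega\cap B_r)}\le C\|v\|_\infty\, r^{2+\alpha}, \qquad 0<r\le 1/2.
$$
This $L^\infty$ estimate upgrades to the desired $|v/u-L_\infty|(x)\le C|x|^{1+\alpha}$ by applying the classical boundary Harnack, in the normalized form $\|w/u\|_{L^\infty(\Omega\cap B_{r/2})}\lesssim\|w\|_{L^\infty(\Omega\cap B_r)}/u(re_n/2)$, to the harmonic function $w:=v-uL_\infty$ (which also vanishes on $\p\Omega$), combined with the Hopf lower bound $u(re_n/2)\gtrsim r$.

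The $L^\infty$ estimate is obtained by iterating a one-step improvement-of-flatness lemma: there exist universal $\rho\in(0,1)$, $C_0>0$, and $\delta_0>0$ such that if $[g]_{C^{1,\alpha}}\le\delta_0$, $\|v\|_{L^\infty(\Omega\cap B_1)}\le 1$, and $u$ is the normalized positive harmonic function, then some linear $L$ with $|L|\le C_0$ satisfies $\|v-uL\|_{L^\infty(\Omega\cap B_\rho)}\le\rho^{2+\alpha}$. Applied to the rescaled problem on $\rho^{-k}\Omega$ at step $k$ (whose boundary $g_k(y'):=\rho^{-k}g(\rho^k y')$ satisfies $[g_k]_{C^{1,\alpha}}=\rho^{k\alpha}[g]_{C^{1,\alpha}}\le\delta_0$ at every scale), the lemma produces linear $L_k$ with $\|v-uL_k\|_{L^\infty(\Omega\cap B_{\rho^k})}\le\rho^{k(2+\alpha)}$. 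The Hopf bound $u(\rho^k e_n/2)\gtrsim\rho^k$ then gives the summable increments $|L_{k+1}(0)-L_k(0)|\lesssim\rho^{k(1+\alpha)}$ and $|\nabla L_{k+1}-\nabla L_k|\lesssim\rho^{k\alpha}$, so $L_k\to L_\infty$. The theorem's hypothesis $[g]_{C^{1,\alpha}}\le 1$ is bridged to the small-flatness assumption by an initial rescaling to a ball of universal radius $r_0\sim\delta_0^{1/\alpha}$.

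The improvement lemma is proved by compactness and contradiction. Assume failure with $\delta_0\to 0$: sequences of domains $\Omega_j$ with $[g_j]_{C^{1,\alpha}}\to 0$, harmonic $v_j$ with $\|v_j\|_\infty\le 1$, and normalized $u_j$, such that no $L$ with $|L|\le C_0$ works. The classical boundary Harnack yields uniform $C^\alpha$ bounds on $v_j/u_j$ up to $\p\Omega_j$; passing to subsequences, $g_j\to 0$, so the limit domain is the flat half-space $\{x_n>0\}$, and $v_j\to v_\infty$, $u_j\to u_\infty$ locally uniformly to harmonic functions in $\{x_n>0\}\cap B_1$ vanishing on the hyperplane. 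The key observation is that Schwarz reflection (odd across $\{x_n=0\}$) extends $u_\infty,v_\infty$ to harmonic functions on $B_1$, yielding factorizations $u_\infty=x_n h_\infty$ and $v_\infty=x_n k_\infty$ with $h_\infty,k_\infty$ smooth and $h_\infty(0)=\p_n u_\infty(0)\ge c>0$ (Hopf plus the normalization $u_\infty(e_n/2)=1$). Hence $\phi_\infty:=v_\infty/u_\infty=k_\infty/h_\infty$ is smooth up to $\{x_n=0\}$ with universal $C^2$ bound, so its linear Taylor polynomial $L^*$ at the origin satisfies $|L^*|\le C_0$ (universal) and $|\phi_\infty-L^*|\le C|x|^2$, giving $|v_\infty-u_\infty L^*|\le C|x|^3$ on $B_\rho$. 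Choosing $\rho$ universally small so that $C\rho^3<\rho^{2+\alpha}/2$ (possible since $\alpha<1$) yields the contradiction. The main obstacle is precisely this limit analysis: the Schwarz reflection trick that supplies the extra regularity of $\phi_\infty$ at the origin works only for a flat boundary, which is why the small-flatness hypothesis $\delta_0$ is essential --- it forces the compactness sequence to collapse to the half-space and activate the extra rigidity that beats the plain $C^\alpha$ estimate by a full derivative.
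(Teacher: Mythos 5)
Your overall strategy (improvement of flatness at a boundary point, passing to a half-space limit, using the extra rigidity of harmonic functions that vanish on a hyperplane) is exactly the one in the paper, and the Schwarz-reflection factorization $u_\infty = x_n h_\infty$, $v_\infty = x_n k_\infty$ is a clean way to see the $C^3$ expansion that the paper expresses directly in terms of the Taylor polynomial of the harmonic limit. But the iteration, as you have written it, has a real gap. Your one-step lemma produces a linear $L$ for a \emph{harmonic} $v$ with $\|v\|_{L^\infty}\le 1$; at the next scale you must apply it to (a rescaling of) $v-uL$, and this function is not harmonic: $\Delta(v-uL)=-2\nabla u\cdot\nabla L$. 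If $\partial_n L\neq 0$, this is $\approx -2\,\partial_n L\,u_n\approx -2\,\partial_n L$, an $O(1)$ source term; after dividing by $\rho^{k(2+\alpha)}$ and rescaling variables by $\rho^{-k}$ this blows up, and the compactness argument at the next step fails. Applying the lemma fresh to the original $v$ at scale $\rho^k$ does not work either, because it only yields a bound $\rho^{k+2+\alpha}$, which falls short of the needed $\rho^{(k+1)(2+\alpha)}$.

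The fix is essentially contained in your own limit analysis, but must be made explicit. Since $h_\infty$ and $k_\infty$ are even in $x_n$, you automatically get $\partial_n\phi_\infty(0)=0$, i.e.\ $\partial_n L^*=0$; so the lemma should be stated to produce $L$ with $\partial_n L=0$. With this constraint, $\Delta(v-uL)=-2\sum_{i<n}(\partial_i L)\,u_i$, and to make this small at small scales you also need the normalization $\nabla u(0)=e_n$ together with $[\nabla u]_{C^\alpha}$ small (arranged by the initial rescaling, as in \eqref{dil}); then $u_i(x)=O(|x|^\alpha)$ for $i<n$ and the rescaled source stays $O(\delta)$. Finally, the lemma should be stated for $|\Delta v|\le\delta$ rather than $\Delta v=0$, so that it applies to the iterates. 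With these three adjustments your proof becomes the paper's Lemma~\ref{imp} (which carries the polynomial through the scales and imposes $a_n=0$ exactly for this reason), and the rest of your argument goes through.
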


First we remark that from the classical Schauder estimates and Hopf lemma, $u$ satisfies
\begin{equation}\label{prop_u_1} u \in C^{1,\alpha}, \quad \|u\|_{C^{1,\alpha}(\Omega \cap B_{1/2})} \leq C, \quad u_{\nu} > c >0 \quad \text{on $\p \Omega \cap B_{1/2}$.}
\end{equation}

Thus, after a dilation and multiplication by a constant we may assume that 
\begin{equation}\label{dil}
\|g\|_{C^{1,\alpha}(B_1)} \leq \delta, \quad \nabla u(0)=e_n, \quad [\nabla u]_{C^\alpha} \leq \delta, 
\end{equation}
where the constant $\delta$ will be specified later.

We claim that Theorem \ref{k1} will follow, if we show that there exists a linear function \begin{equation}\label{P}P(x) = a_0 + \sum_{i=1}^n a_i x_i, \quad a_n=0\end{equation} such that 
\begin{equation}\label{est1} \left|\frac v u(x) - P(x)\right| \leq C |x|^{1+\alpha}, \quad x \in \Omega \cap B_1\end{equation}  for $C$ universal. 

To obtain \eqref{est1}, we prove the next lemma. 

\begin{lem} \label{imp}Assume that, for some $r \leq 1$ and $P$ as in \eqref{P} with $|a_i| \leq 1,$
$$\|v - uP\|_{L^\infty(\Omega \cap B_r)} \leq r^{2+\alpha}.$$ Then, there exists a linear function $$\bar P(x) = \bar a_0 + \sum_{i=1}^n \bar a_i x_i, \quad \bar a_n=0$$ such that
$$\|v - u \bar P\|_{L^\infty(\Omega \cap B_{\rho r})} \leq (\rho r)^{2+\alpha},$$ for some $\rho >0$ universal, and $$\|P-\bar P\|_{L^\infty(B_r)} \leq C r^{1+\alpha}, $$ with $C$ universal. 
\end{lem}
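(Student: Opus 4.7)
The plan is to prove the lemma by a rescaling and compactness-contradiction argument, reducing to the flat-boundary model problem in the half-space.

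First, I would rescale to unit scale. Set $\tilde\Omega = \Omega/r$, $\tilde u(x) = u(rx)/r$, and $\tilde v(x) = [v(rx) - u(rx) P(rx)]/r^{2+\alpha}$, so that the hypothesis becomes $\|\tilde v\|_{L^\infty(\tilde\Omega \cap B_1)} \leq 1$ with $\tilde v = 0$ on $\partial\tilde\Omega \cap B_1$. Producing the desired $\bar P$ is equivalent to producing a linear $Q(x) = c_0 + \sum_{i<n}c_i x_i$ with universally bounded coefficients such that $|\tilde v - \tilde u \, Q| < \rho^{2+\alpha}$ on $\tilde\Omega\cap B_\rho$. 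Since $u$ is harmonic and $P$ is linear with $a_n = 0$, $\Delta(v-uP) = -2\nabla u \cdot \nabla P = -2(\nabla u - e_n)\cdot(a_1,\dots,a_{n-1},0)$, so $|\Delta \tilde v| \leq C\delta$ on $\tilde\Omega\cap B_1$ by \eqref{dil}. Similarly $\tilde u$ is positive harmonic, vanishes on $\partial\tilde\Omega \cap B_1$, and $\|\tilde u - x_n\|_{L^\infty(B_1)}\leq C\delta r^\alpha$.

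Next, fix a universal $\rho$ (to be chosen) and suppose for contradiction that the lemma fails for every $\delta > 0$. Take $\delta_k \to 0$ and counterexamples $(r_k, \Omega_k, u_k, v_k, P_k)$; rescale each. By interior Schauder plus boundary H\"older estimates for $\Delta w = f$ on $C^{1,\alpha}$ domains whose defining graphs have $C^{1,\alpha}$-norm at most $\delta_k r_k^\alpha \to 0$, together with the uniform bounds $\|\tilde v_k\|_{L^\infty} \leq 1$ and $|\Delta\tilde v_k|\leq C\delta_k$, the family $\{\tilde v_k\}$ is equicontinuous on compact subsets of $\overline{\{x_n>0\}}\cap B_1$. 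Passing to a subsequence, $\tilde\Omega_k\cap B_1 \to \{x_n>0\}\cap B_1$ in Hausdorff distance, $\tilde u_k\to x_n$ uniformly, and $\tilde v_k \to v_\infty$ locally uniformly, where $v_\infty$ is harmonic in $\{x_n>0\}\cap B_1$, vanishes on $\{x_n=0\}\cap B_1$, and $\|v_\infty\|_\infty\leq 1$.

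By odd reflection across $\{x_n=0\}$, $v_\infty$ extends to a harmonic function on $B_1$. The harmonic polynomials of degree $\leq 2$ vanishing on $\{x_n=0\}$ are spanned by $x_n$ and the $x_n x_i$ for $1\leq i\leq n-1$, which matches the ansatz $\tilde u \, Q$ with $Q$ linear and $Q_n = 0$. Hence there exist a universal $C_0$ and a linear $P_\infty(x) = c_0 + \sum_{i<n} c_i x_i$ with universally bounded coefficients such that $|v_\infty(x) - x_n P_\infty(x)| \leq C_0 |x|^3$ on $\{x_n>0\}\cap B_{1/2}$. Now pick $\rho$ universal with $C_0 \rho^{1-\alpha}\leq \tfrac14$. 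For $k$ large, combining the uniform convergences gives $|\tilde v_k - \tilde u_k P_\infty| < \rho^{2+\alpha}$ on $\tilde\Omega_k \cap B_\rho$. Undoing the rescaling yields $\bar P_k = P_k + \bar Q_k$ with $\bar Q_k(y) = r_k^{1+\alpha} c_0 + r_k^\alpha\sum_{i<n} c_i y_i$, so $(\bar Q_k)_n = 0$ and $\|\bar Q_k\|_{L^\infty(B_{r_k})} \leq C r_k^{1+\alpha}$, contradicting the assumption.

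The main obstacle I expect is the uniform up-to-the-boundary H\"older estimate used in the compactness step: the rescaled functions $\tilde v_k$ vanish on $C^{1,\alpha}$ graphs whose norms tend to zero and have Laplacians uniformly bounded by $C\delta_k$, and one must ensure they are uniformly H\"older up to the boundary independently of $k$. Standard Schauder theory does this for each $k$, but uniformity in $k$ — where the smallness of $\delta_k$ really enters — requires invoking a quantitative boundary regularity result for Poisson's equation on converging $C^{1,\alpha}$ domains. A minor secondary point is the bookkeeping that transports the universal bound on the coefficients of $P_\infty$ back into the bound $\|P-\bar P\|_{L^\infty(B_r)} \leq Cr^{1+\alpha}$.
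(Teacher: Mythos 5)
Your proposal is correct and follows essentially the same compactness argument as the paper: rescale by $r$, observe $|\Delta\tilde v|\le C\delta$ with $\tilde v=0$ on $\partial\tilde\Omega$, pass to the limit $\delta\to 0$ to get a harmonic $v_\infty$ on $B_1^+$ vanishing on $\{x_n=0\}$, approximate it by $x_n Q$ with $Q$ linear, and transfer back using $\tilde u\approx x_n$. The only cosmetic difference is that you frame the compactness step as a contradiction argument whereas the paper argues directly; the boundary-equicontinuity point you flag is indeed what makes the convergence work, and it is standard for domains with uniformly small $C^{1,\alpha}$ (hence uniformly Lipschitz) graphs.
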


\begin{proof} We write
$$v(x) = u(x)P(x) + r^{2+\alpha} \tilde v\left(\frac x r \right), \quad x \in \Omega \cap B_r,$$ with
$$\|\tilde v\|_{L^\infty(\tilde \Omega \cap B_1)} \leq 1, \quad \tilde \Omega := \frac 1 r \Omega.$$
Define also,
$$\tilde u(x) := \frac{u(rx)}{r}, \quad x\in \tilde \Omega \cap B_1.$$

We have,
$$0= \Delta v = \Delta (uP) + r^\alpha \Delta \tilde v\left(\frac x r \right), \quad x\in \Omega \cap B_r,$$
and
$$\Delta (uP) = 2 \nabla u \cdot \nabla P = 2 \sum_{i=1}^{n-1} a_i u_i, \quad x \in \Omega \cap B_r.$$
Moreover, from \eqref{dil} we have
$$\|\nabla u - e_n\|_{L^\infty (\Omega \cap B_r)} \leq \delta r^\alpha.$$
Thus, $\tilde v$ solves
\begin{equation}\label{tildev}|\Delta \tilde v | \leq 2\delta \quad \text{in $\tilde \Omega \cap B_1$}, \quad \tilde v = 0 \quad \text{on $\p \tilde \Omega \cap B_1,$}\end{equation}
and $$\|\tilde v\|_{L^\infty(\tilde \Omega \cap B_1)} \leq 1.$$
Hence, as $\delta \to 0$ (using also \eqref{dil}) $\tilde v$ must converge (up to a subsequence) uniformly to a solution $v_0$ of
$$\Delta v_0 = 0 \quad \text{in $B_1^+$}, \quad  v_0 = 0 \quad \text{on $\{x_n=0\} \cap \bar B_1^+$}$$
and 
$$|v_0| \leq 1 \quad \text{in $B_1^+$}. $$ 

Such a $v_0$ satisfies,
$$\|v_0 - x_n Q\|_{L^\infty(B_\rho^+)} \leq C \rho^3 \leq \frac 1 4 \rho^{2+\alpha},$$
 for some $\rho$ universal and $Q = b_0 + \sum_{i=1}^n b_i x_i, |b_i| \leq C.$ Notice that $b_n=0$ since $x_n Q$ is harmonic.
 
 By compactness, if $\delta$ is chosen sufficiently small, then 
 $$\|\tilde v - x_n Q\|_{L^\infty(\tilde \Omega \cap B_\rho)} \leq \frac 1 2 \rho^{2+\alpha}.$$
 From \eqref{dil}, $$|\tilde u - x_n| \leq \delta$$ thus
 $$\|\tilde v - \tilde u Q\|_{L^\infty(\tilde \Omega \cap B_\rho)} \leq \rho^{2+\alpha}$$ 
from which the desired conclusion follows by choosing 
$$\bar P(x) = P(x) + r^{1+\alpha} Q\left(\frac x r \right).$$
\end{proof}

\begin{rem}\label{betterest} Notice that, from boundary Harnack inequality, $\tilde v$ satisfies (see \eqref{tildev} and recall that $u(\frac 1 2 e_n) =1$)
$$|\tilde v | \leq C \tilde u \quad \text{in $\tilde \Omega \cap B_{1/2}$},$$ with $C$ universal. Thus our assumption can be improved in $B_{r/2}$ to 
$$|v(x) - uP(x)| \leq C u(x)r^{1+\alpha} \quad \text{in $\Omega \cap B_{r/2}$.}$$

Moreover, 
$$\left[\frac{\tilde v}{\tilde u}\right]_{C^{1,\alpha}(\tilde \Omega \cap B_{1/4}(\frac 12 e_n))} \leq C$$ since $\tilde u$ is bounded below in such region. This, together with the identity
$$\frac{v}{u} = P + r^{1+\alpha} \frac{\tilde v}{\tilde u}\left(\frac x r\right)  \quad x \in \Omega \cap B_r$$
implies
\begin{equation}\label{**}  \left[\nabla \left(\frac{v}{u}\right)\right]_{C^{\alpha}(\Omega \cap B_{r/4}(\frac r 2 e_n))}=\left[\frac{\tilde v}{\tilde u}\right]_{C^{1,\alpha}(\tilde \Omega \cap B_{1/4}(\frac 12 e_n))} \leq C.\end{equation}
\end{rem}

\

\textit{Proof of Theorem $\ref{k1}.$} After multiplying $v$ by a small constant, the assumptions of the lemma are satisfied with $P=0$ and $r=r_0$ small. Thus, if we choose $r_0$ small universal, we can apply the lemma indefinitely and obtain a limiting linear function $P_0$ such that $$|v - u P_0| \leq Cr^{2+\alpha}, \quad r \leq r_0.$$ In fact, from Remark \ref{betterest} we obtain
$$\left|\frac v u - P_0\right| \leq C |x|^{1+\alpha}$$ which together with \eqref{**} gives the desired conclusion. 
\qed

\

It is easy to see that our proof holds in greater generality. For example, if $v$ solves $\Delta v=f  \in C^\alpha$ in $\Omega \cap B_1$  and vanishes continuously on $\p \Omega \cap B_1,$ then we get
$$\left\|\frac  v u \right\|_{C^{1,\alpha}(\Omega \cap B_{1/2})} \leq C( \|v\|_{L^\infty} + \|f\|_{C^\alpha}).$$ To obtain this estimate it suffices to take in Lemma \ref{imp} linear functions $P(x)= a_0 + \sum_{i=1}^n a_i x_i$ satisfying $2a_n u_n(0) = f(0).$ In fact, the following more general Theorem holds. 

\begin{thm}\label{gen} Let $$\mathcal L u := Tr(A \,  D^2 u) + b\cdot \nabla u + c \, u,$$ with $A \in C^\alpha, b, c \in L^\infty$ and $$\lambda I \leq A \leq \Lambda I, \quad \|A\|_{C^\alpha}, \|b\|_{L^\infty}, \|c\|_{L^\infty} \leq \Lambda.$$ Assume $$\mathcal L u = 0, \, u>0 \quad \text{in $\Omega \cap B_1$}, \quad u=0 \quad \text{on $\p \Omega \cap B_1$}$$ and 
$$\mathcal Lv = f \in C^\alpha \quad \text{in $\Omega \cap B_1,$} \quad v=0\quad \text{on $\p \Omega \cap B_1$}.$$ Then, if $u$ is normalized so that $u(\frac 1 2 e_n)=1$
$$\left\|\frac v u \right\|_{C^{1,\alpha}(\Omega \cap B_{1/2})} \leq C(\|v\|_{L^\infty} + \|f\|_{C^\alpha}) $$ with $C$ depending on $\alpha, \lambda, \Lambda$ and $n$.
\end{thm}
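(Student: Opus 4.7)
The plan is to mirror the compactness-iteration argument in the proof of Theorem \ref{k1}, adjusting the class of approximating polynomials so that both the variable coefficients of $\mathcal{L}$ and the right-hand side $f$ are absorbed correctly. As in Section 2, after a rotation making $A(0)=I$, a dilation, and multiplication by a constant, I would reduce to the setting where
$$\|g\|_{C^{1,\alpha}} + [A]_{C^\alpha} + \|b\|_{L^\infty} + \|c\|_{L^\infty} + [\nabla u]_{C^\alpha} \le \delta, \quad \nabla u(0)=e_n,$$
and, after multiplying $v$ (and hence $f$) by a small constant, also $\|v\|_{L^\infty}+\|f\|_{C^\alpha}\le \delta$, with $\delta$ a small universal constant to be chosen later.

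The key novelty is in the replacement of Lemma \ref{imp}: I would now require the approximating linear function $P(x)=a_0+\sum_{i=1}^n a_ix_i$ with $|a_i|\le 1$ to satisfy the compatibility condition $2a_n u_n(0)=f(0)$, and allow only updates of the form $\bar P=P+r^{1+\alpha}Q(\cdot/r)$ where $Q$ is linear with zero $x_n$-coefficient, so that the compatibility persists at every step of the iteration. The motivation is the product-rule identity
$$\mathcal{L}(uP)=2(A\nabla u)\cdot\nabla P + u\,(b\cdot\nabla P),$$
in which the term $P\mathcal{L}u$ vanishes because $\mathcal{L}u=0$. Evaluated at $0$ (with $A(0)=I$, $\nabla u(0)=e_n$, $u(0)=0$) the right-hand side equals $2a_n u_n(0)=f(0)$, so $\mathcal{L}(v-uP)(0)=0$ by design; combined with the $C^\alpha$ moduli of $A$, $\nabla u$, $f$, and with $u(0)=0$ making $u(b\cdot\nabla P)$ of order $r$, this yields
$$|\mathcal{L}(v-uP)|\le C\delta r^\alpha \quad\text{in }\Omega\cap B_r.$$

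Rescaling $\tilde v(y)=(v-uP)(ry)/r^{2+\alpha}$ and $\tilde u(y)=u(ry)/r$ then produces a function $\tilde v$ in $\tilde\Omega\cap B_1$ solving an elliptic equation whose coefficients are close to $\Delta$, with right-hand side of size $C\delta$, zero boundary values, and $|\tilde v|\le 1$. Boundary H\"older estimates give equicontinuity, and a limit as $\delta\to 0$ yields $v_0$ harmonic in $B_1^+$ with $v_0=0$ on $\{y_n=0\}$; by reflection $v_0=y_n Q+O(|y|^3)$ for some linear $Q$ with $\p_n Q=0$ (forced by harmonicity of $y_n Q$). Replacing $y_n$ by $\tilde u$ (allowed because $|\tilde u-y_n|\le \delta$) and undoing the scaling supplies the update $\bar P$ with the required bounds. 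Iterating with $r_k=\rho^k r_0$ produces a limiting linear $P_\infty$ with $|v-uP_\infty|\le C|x|^{2+\alpha}$, and a boundary-Harnack-type improvement as in Remark \ref{betterest}, together with interior $C^{1,\alpha}$ estimates on balls of radius comparable to $|x|$ lying away from $\p\Omega$, yield the pointwise $C^{1,\alpha}$ estimate for $v/u$ at $0$; the same argument applied at every boundary point completes the proof.

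The main obstacle is precisely the residual estimate $|\mathcal{L}(v-uP)|\le C\delta r^\alpha$: one must verify that a single linear parameter $a_n$ cancels the full leading-order behaviour of $\mathcal{L}(uP)-f$ at the origin in spite of the variable coefficients $A$ and the drift $b$. This cancellation rests essentially on $\mathcal{L}u=0$ (removing the $P\mathcal{L}u$ term from the product-rule expansion), on $u(0)=0$ (making $u(b\cdot\nabla P)$ a lower-order perturbation), and on the $C^\alpha$ continuity of $A$, $\nabla u$, and $f$ (controlling differences between values at $0$ and in $B_r$). Once this is in hand, the compactness step, the extraction of the linearized profile, and the iteration proceed exactly as in Theorem \ref{k1}.
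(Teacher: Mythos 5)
Your proposal is correct and follows essentially the same route as the paper's sketch: the compatibility condition $2a_n u_n(0)=f(0)$ on the $x_n$-coefficient of the approximating linear functions, the product-rule identity exploiting $\mathcal L u=0$ and $u(0)=0$ to get $|\mathcal L(uP)-f|\le C\delta r^\alpha$, and then the same compactness/iteration scheme as in Theorem \ref{k1}. The only cosmetic difference is that you phrase the normalization $A(0)=I$ as a ``rotation'' (it is really an affine change of variables), which is harmless.
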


\begin{rem} We emphasize that the conditions on the matrix  $A$ and the right hand side $f$ are those that guarantee interior $C^{2,\alpha}$ Schauder estimates. However the conditions on the domain $\Omega$ and the lower order coefficients $b, c$ are those that guarantee interior $C^{1,\alpha}$ Schauder estimates. \end{rem}

\begin{rem} The theorem holds also for divergence type operators
$$Lu=\text{div}(A \nabla u + b u), \quad \quad A \in C^\alpha, \quad b \in C^ \alpha.$$

\end{rem}

The proof of Theorem \ref{gen} follows the same argument of Theorem \ref{k1}. For convenience of the reader,  we give a sketch of the proof.

\

\textit{Sketch of the proof of Theorem $\ref{gen}$.} After a dilation we may assume that \eqref{dil} holds and also
\begin{equation}\label{dil2}  A(0) = I, \quad \max\{[A]_{C^\alpha},\|b\|_{L^\infty},\|c\|_{L^\infty},  [f]_{C^\alpha}\} \leq \delta
\end{equation}
with $\delta$ to be chosen later. Again, it suffices to show the analogue of Lemma \ref{imp} in this context, with the $x_n$ coefficient of $P$ and $\bar P$ satisfying $$2a_n = 2 \bar a_n = f(0).$$ 
Define $\tilde v$ as before. Then
$$f = \mathcal L v = \mathcal L(uP) + r^\alpha \tilde{\mathcal L} \tilde v \left(\frac x r\right) \quad x\in \Omega \cap B_r$$
with
$$ \tilde{\mathcal L} \tilde v := Tr(\tilde A \, D^2 \tilde v) + r \tilde b \cdot \nabla \tilde v + r^2 \tilde c \, \tilde v,$$

$$\tilde A(x) = A(rx), \quad \tilde b (x) = b(rx), \quad \tilde c (x) = c(rx), \quad x \in \tilde \Omega \cap B_1.$$

On the other hand,
$$\mathcal L(uP)= (\mathcal L u)P + 2 (\nabla u)^T A \nabla P + u \, b \cdot \nabla P$$
thus, using \eqref{dil}-\eqref{dil2} and the fact that $2a_n=f(0)$
$$|\mathcal L(uP) - f | \leq C \delta r^\alpha, \quad x \in \Omega \cap B_r.$$ From this we conclude that $$|\tilde{\mathcal{L}} \tilde v| \leq C \delta \quad \text{in $\tilde \Omega \cap B_1$}$$
and we can argue by compactness exactly as before. \qed

\section{The general case, $k \geq 2.$}

Let $\Omega \subset \R^n$ with $\p \Omega \in C^{k,\alpha}$. Precisely,
$$\p \Omega = \{(x', g(x') \ | \ x' \in \R^{n-1}\}, \quad g(0)=0, \quad \nabla_{x'} g(0)=0, \quad \|g\|_{C^{k,\alpha}} \leq 1.$$

\begin{thm}\label{genk2} Let $$\mathcal L u := Tr(A D^2 u) + b \cdot \nabla u + cu$$ with  $$\lambda I \leq A \leq \Lambda I,$$ and $$\max\{  \|A\|_{C^{k-1, \alpha}}, \|b\|_{C^{k-2,\alpha}}, \|c\|_{C^{k-2,\alpha}} \}\leq \Lambda.$$ Assume \begin{equation}\label{u}\mathcal Lu = 0, u>0 \quad \text{in $\Omega \cap B_1$}, \quad u=0 \quad \text{on $\p \Omega \cap B_1$}\end{equation} and 
\begin{equation}\label{v}\mathcal Lv = f \in C^{k-1,\alpha} \quad \text{in $\Omega \cap B_1,$} \quad v=0\quad \text{on $\p \Omega \cap B_1$}.\end{equation} Then, if $u$ is normalized so that $u(\frac 1 2 e_n)=1$
$$\left\|\frac v u \right \|_{C^{k,\alpha}(\Omega \cap B_{1/2})} \leq C(\|v\|_{L^\infty} + \|f\|_{C^{k-1,\alpha}}) $$ with $C$ depending on $k, \alpha, \lambda, \Lambda$ and $n$.
\end{thm}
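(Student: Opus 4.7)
The plan is to adapt the iteration/compactness scheme used in the proof of Theorem \ref{gen} to degree-$k$ polynomial approximation. Using Schauder estimates and the Hopf lemma (giving $u \in C^{k,\alpha}$ up to the boundary with $u_\nu(0) > 0$), after a dilation and scaling one may reduce to
$$\|g\|_{C^{k,\alpha}},\ \|u - x_n\|_{C^{k,\alpha}(\Omega \cap B_1)},\ [A]_{C^{k-1,\alpha}},\ \|b\|_{C^{k-2,\alpha}},\ \|c\|_{C^{k-2,\alpha}},\ [f]_{C^{k-1,\alpha}} \leq \delta,$$
with $A(0)=I$ and $\delta$ a universally small constant to be fixed later. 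Call a polynomial $P$ of degree $k$ \emph{admissible} if the residue $\mathcal L(uP) - f$ vanishes to order $k-1$ at $0$; this is a finite set of linear conditions on the coefficients of $P$, generalizing the single scalar condition $2 a_n u_n(0) = f(0)$ imposed in the proof of Theorem \ref{gen}.

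The core technical step is the direct higher-order analog of Lemma \ref{imp}: there exist universal $\rho \in (0,1)$ and $\delta_0 > 0$ such that, whenever $\delta \leq \delta_0$, $P$ is admissible with uniformly bounded coefficients, and
$$\|v - uP\|_{L^\infty(\Omega \cap B_r)} \leq r^{k+1+\alpha}$$
for some $r \leq 1$, one may find an admissible polynomial $\bar P$ of degree $k$ with $\|\bar P - P\|_{L^\infty(B_r)} \leq C r^{k+\alpha}$ and
$$\|v - u \bar P\|_{L^\infty(\Omega \cap B_{\rho r})} \leq (\rho r)^{k+1+\alpha}.$$
I would prove this by compactness. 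Set $\tilde v(y) = r^{-(k+1+\alpha)}(v - uP)(ry)$ and $\tilde u(y) = r^{-1}u(ry)$ on $\tilde\Omega = r^{-1}\Omega$. Since $\mathcal L u = 0$, the product rule yields
$$\mathcal L(uP) = u\bigl(\mathrm{Tr}(A D^2 P) + b \cdot \nabla P\bigr) + 2\langle A\nabla u, \nabla P\rangle,$$
so admissibility of $P$ together with smallness of all coefficient seminorms forces $|\tilde{\mathcal L}\tilde v| \leq C\delta$ in $\tilde\Omega \cap B_1$, while $\tilde v = 0$ on $\partial\tilde\Omega \cap B_1$ and $\|\tilde v\|_{L^\infty} \leq 1$. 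Letting $\delta \to 0$, $\tilde v$ converges subsequentially to $v_0$ harmonic in $B_1^+$, vanishing on $\{x_n=0\}$, with $|v_0| \leq 1$. Schwarz reflection extends $v_0$ to a harmonic function on $B_1$, so its Taylor polynomial at $0$ of degree $k+1$ is of the form $x_n Q$ with $Q$ a polynomial of degree $k$ (and $x_n Q$ automatically harmonic), and interior estimates give $\|v_0 - x_n Q\|_{L^\infty(B_\rho^+)} \leq C\rho^{k+2} \leq \tfrac14 \rho^{k+1+\alpha}$ for $\rho$ universal. Transferring back via $|\tilde u - x_n| \leq C\delta$ and setting $\bar P(x) = P(x) + r^{k+\alpha} Q(x/r)$ yields the desired improvement; admissibility of $\bar P$ follows from the harmonicity of $x_n Q$ in the limit, with the finite-$\delta$ defect absorbed into the estimate.

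Iterating the lemma from a small initial scale $r_0$ and an initial admissible $P_0$ produces a limit admissible polynomial $P_\infty$ with
$$|v(x) - u(x) P_\infty(x)| \leq C|x|^{k+1+\alpha}, \qquad x \in \Omega \cap B_{r_0},$$
the constant $C$ being controlled by $\|v\|_{L^\infty} + \|f\|_{C^{k-1,\alpha}}$. Applied at every boundary point and combined with the higher-order analog of Remark \ref{betterest}, which rescales a ball at distance $\sim r$ from $\partial\Omega$ to a fixed scale and applies classical interior Schauder to $v/u$, this gives the desired estimate $\|v/u\|_{C^{k,\alpha}(\Omega \cap B_{1/2})} \leq C(\|v\|_{L^\infty} + \|f\|_{C^{k-1,\alpha}})$. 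I expect the main difficulty to be the bookkeeping of the admissible class through the iteration: showing that the correction $Q$ produced by the flat harmonic limit yields an admissible $\bar P$ up to an $O(\delta)$ defect absorbable at each scale, and that the accumulated defects across scales sum to produce a genuine admissible $P_\infty$. This is the polynomial-valued analog of the scalar bookkeeping in the $k=1$ case and is the principal point where the interplay between higher-order Taylor matching and the product-rule identity for $\mathcal L(uP)$ must be handled carefully.
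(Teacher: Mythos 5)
Your scheme coincides with the paper's: rescale so all relevant norms are $O(\delta)$, define the admissible (the paper's ``approximating'') class by matching the Taylor expansion of $\mathcal L(uP)$ with that of $f$ to order $k-1$, and prove a one-step improvement lemma by compactness against the flat harmonic model. The compactness argument, the role of $x_nQ$, and the final iteration are all as in the paper.

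The one place your sketch is genuinely incomplete is the step you yourself flag as ``the main difficulty,'' and the way you propose to resolve it --- treating $\bar P(x)=P(x)+r^{k+\alpha}Q(x/r)$ as admissible ``up to an $O(\delta)$ defect absorbable at each scale'' --- would in fact break the iteration. If $\mathcal L(u\bar P)-f$ has a non-vanishing Taylor coefficient of order $j\le k-1$ of size $O(\delta)$, then after the next rescaling $\tilde v(y)=r^{-(k+1+\alpha)}(v-u\bar P)(ry)$ the corresponding contribution to $\tilde{\mathcal L}\tilde v$ scales like $\delta\, r^{\,j-(k-1+\alpha)}$, which diverges as $r\to 0$; so the bound $|\tilde{\mathcal L}\tilde v|\le C\delta$ needed for the compactness step is lost and the defects cannot simply accumulate harmlessly. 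The paper avoids this by never allowing a defect: it explicitly replaces $Q$ by a nearby polynomial $\bar Q$ so that $P+r^{k+\alpha}\bar Q(x/r)$ is \emph{exactly} approximating. This is possible because the admissibility conditions \eqref{Q} form a triangular linear system in the coefficients $a_m$ (solve for $a_{l+\bar n}$ in terms of lower-order ones), and the flat polynomial $Q$ satisfies the $\delta=0$ version of the same system; subtracting the two shows $\bar Q$ exists with $\|Q-\bar Q\|\le C\delta$. This linear-algebraic correction is the content you would need to add to turn your sketch into a proof; once it is in place the rest of your argument, including the analog of Remark~\ref{betterest}, goes through as written.
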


From now on, a positive constant depending on $n, k, \alpha, \lambda, \Lambda$ is called universal.

\begin{rem} If we are interested only in $C^{k,\alpha}$ estimates for $\dfrac v u$ on $\p \Omega \cap B_{1/2}$, then the regularity assumption on $c$ can be weakened to $\|c\|_{C^{k-3, \alpha}} \leq \Lambda.$
\end{rem}
If $u$ and $v$ solve \eqref{u}-\eqref{v} respectively, the rescalings 
$$\tilde u(x) = \frac{1}{r_0} u(r_0 x), \quad \tilde v(x) = \frac{1}{r_0} v(r_0 x)$$ satisfy the same problems with $\Omega, A, b, c$ and $f$ replaced by
\begin{align*} &\tilde \Omega = \frac{1}{r_0} \Omega, \quad \tilde A(x) = A(r_0x), \quad \tilde b(x)=r_0 b(r_0 x), \\  &\tilde c(x) = r_0^2 c(r_0 x),  \quad \tilde f(x)= r_0 f(r_0 x).\end{align*}
Thus, as in the case $k=1$, we may assume that 
$$\nabla u(0)=e_n, \quad A(0)=I$$
and that the following norms are sufficiently small:
\begin{equation}\label{small} 
\max\{\|g\|_{C^{k,\alpha}}, \|A-I\|_{C^{k-1,\alpha}}, \|b\|_{C^{k-2,\alpha}}, \|c\|_{C^{k-2,\alpha}}, \|f\|_{C^{k-1,\alpha}}, \|u-x_n\|_{C^{k,\alpha}} \} \leq \delta,
\end{equation} with $\delta$ to be specified later.

The proof of Theorem \ref{genk2} is essentially the same as in the case $k=1$. However, we now need to work with polynomials of degree $k$ rather than linear functions. 

We introduce some notation. A polynomial $P$ of degree $k$ is denoted by
$$P(x)= a_m x^m, \quad m=(m_1,m_2,\ldots, m_n), |m|=m_1+\ldots+m_n,$$
with the $a_m$ non-zero only if $m \geq 0$ and $|m| \leq k.$
We use here the summation convention over repeated indices and the notation 
$$x^{m}=x_1^{m_1}\ldots x_{n}^{m_n}.$$

Also, in what follows, $\bar i $ denotes the multi-index with $1$ on the $i$th position and zeros elsewhere and  $\|P\|=\max|a_m|$.

Given $u$ a solution to \eqref{u}, we will approximate  a solution $v$ to \eqref{v} with polynomials $P$
such that $\mathcal L(uP)$ and $f$ are tangent at 0 of order $k-1$.

Below we show that the coefficients of such polynomials must satisfy a certain linear system.

Indeed,
$$\mathcal L(uP) = (\mathcal L u)P + 2 (\nabla u)^T A \nabla P + u \, tr(A D^2P) + u \, b \cdot \nabla P.$$
Since $\mathcal L u=0,$ we find 
$$\mathcal L(uP) = g^i P_i +  g^{ij}P_{ij}, \quad g^i \in C^{k-2,\alpha},  \, \, g^{ij} \in C^{k-1,\alpha}.$$
Using the first order in the expansions below ($l.o.t$ = lower order terms),
$$A=I + l.o.t., \quad u=x_n + l.o.t, \quad \nabla u= e_n + l.o.t., $$ we write each $g^i, g^{ij}$ as a sum of a polynomial of degree $k-1$ and a reminder of order $O(|x|^{k-1+\alpha})$. We find 
$$g^i=2\delta_{in} + l.o.t, \quad g^{ij}=\delta_{ij} x_n + l.o.t.$$

In the case $P=x^m$ we obtain 
\begin{align*} \mathcal L(u \, x^m) = & \, m_n(m_n+1) x^{m-\bar n} + \sum_{i \neq n} m_i(m_i-1) x^{m-2\bar i +\bar n} \\ &+ c_l^m x^l + w_m(x), \end{align*}
with \begin{equation} \label{c1}c_l^m \neq 0 \quad \text{only if $|m| \leq |l| \leq k-1$}, \quad \quad \mbox{and} \quad w_m=O(|x|^{k-1+\alpha}).\end{equation}
Also in view of \eqref{small} 
\begin{equation}\label{c2}|c_l^m| \leq C \delta, \quad |w_m| \le C \delta |x|^{k-1+\alpha}, \quad \|w_m\|_{C^{k-2,\alpha}(B_r)} \leq C\delta r.\end{equation}

Thus, if $P=a_m x^m, $ with $\|P\| \leq 1$ then 
$$\mathcal L(uP) = R(x) + w(x), \quad R(x) = d_lx^l, \quad \deg R=k-1,$$ with $w$ as above and the coefficients of $R$ satisfying 
\begin{equation}\label{Q}
d_l = (l_n +1)(l_n+2)a_{l+\bar n} + \sum_{i \neq n} (l_i+1)(l_i+2) a_{l+2\bar i -n} +c_l^m a_m.
\end{equation}

\begin{defn} We say that $P$ is an approximating polynomial for $v/u$ at $0$ if the coefficients $d_l$ of $R(x)$ coincide with the coefficients of the Taylor polynomial of order $k-1$ for $f$ at 0.
\end{defn}

We think of \eqref{Q} as an equation for $a_{l+\bar n}$ in terms of $d_l$ and a linear combination of $a_m$'s with either $|m| < |l| +1$ or when $|m|= l+1$ with $m_n < l_n+1.$
Thus the $a_m$'s are uniquely determined from the system \eqref{Q} once $d_l$ and $a_m$ with $m_n=0$ are given.

The proof of Theorem \ref{genk2} now follows as in the case $k=1$, once we establish the next lemma.

\begin{lem} Assume that for some $r \leq 1$ and an approximating polynomial $P$ for $v/u$ at $0$,  with $\|P\| \le 1$, we have
$$\|v-uP\|_{L^\infty(\Omega \cap B_r)} \leq r^{k+1+\alpha}.$$ Then, there exists an approximating polynomial $\bar P$ for $v/u$ at $0$, such that 
$$\|v-u\bar P\|_{L^\infty(\Omega \cap B_{\rho r})} \leq (\rho r)^{k+1+\alpha}$$
for $\rho>0$ universal, and 
$$\|P-\bar P\|_{L^\infty(B_r)} \leq Cr^{k+\alpha},$$ with $C$ universal.
\end{lem}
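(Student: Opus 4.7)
The plan is to mirror the proof of Lemma \ref{imp}, replacing linear polynomials by polynomials of degree $k$ and keeping careful track of the approximating condition with respect to $\mathcal L$ and $f$.

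First I would rescale: set $\tilde v(y) := r^{-(k+1+\alpha)}[v(ry) - u(ry)P(ry)]$, $\tilde u(y) := r^{-1}u(ry)$, and $\tilde\Omega := r^{-1}\Omega$, so that $\|\tilde v\|_{L^\infty(\tilde\Omega \cap B_1)} \leq 1$. From the expansion $\mathcal L(uP) = R + w$ produced in \eqref{c1}, the hypothesis that $P$ is approximating means that $R$ equals the Taylor polynomial of $f$ at $0$ of order $k-1$, while $|w| \leq C\delta|x|^{k-1+\alpha}$ by \eqref{c2}. Since $\|f\|_{C^{k-1,\alpha}} \leq \delta$, Taylor's theorem also gives $|f - R| \leq C\delta|x|^{k-1+\alpha}$, so $|\mathcal L(v - uP)| \leq C\delta|x|^{k-1+\alpha}$. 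Rescaling with the induced operator $\tilde{\mathcal L}$ (constructed exactly as in the proof of Theorem \ref{gen}) then yields $|\tilde{\mathcal L}\tilde v| \leq C\delta$ in $\tilde\Omega \cap B_1$.

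Next, by compactness as $\delta \to 0$, a subsequence of $\tilde v$ converges uniformly to a limit $v_0$ harmonic in $B_1^+$, vanishing on $\{y_n = 0\} \cap B_1$, with $|v_0| \leq 1$. By odd reflection $v_0$ extends harmonically (hence analytically) to $B_1$, and each homogeneous piece of its Taylor series at $0$ factors as $y_n$ times a harmonic polynomial. Truncating yields a polynomial $Q$ of degree $\leq k$ with $y_n Q$ harmonic, $\|Q\| \leq C$ universal, and $\|v_0 - y_n Q\|_{L^\infty(B_\rho^+)} \leq C\rho^{k+2}$. Fixing $\rho$ universal so that $C\rho^{k+2} \leq \tfrac14\rho^{k+1+\alpha}$, compactness for $\delta$ sufficiently small gives $\|\tilde v - y_n Q\|_{L^\infty(\tilde\Omega \cap B_\rho)} \leq \tfrac12\rho^{k+1+\alpha}$. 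Replacing $y_n$ by $\tilde u$ (noting $|\tilde u - y_n| \leq C\delta$ on $B_1$ since $u(0)=0$, $\nabla u(0)=e_n$, $\|u-x_n\|_{C^{k,\alpha}} \leq \delta$) and scaling back, the candidate $\hat P(x) := P(x) + r^{k+\alpha} Q(x/r)$ satisfies $\|v - u\hat P\|_{L^\infty(\Omega \cap B_{\rho r})} \leq \tfrac34(\rho r)^{k+1+\alpha}$.

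The main obstacle I expect is that $\hat P$ is in general not an approximating polynomial: since $\mathcal L \neq \Delta$ and $u \neq x_n$, the coefficients $c_l^m$ in \eqref{Q} spoil the approximating condition even when $y_n Q$ is harmonic. To fix this, I would define $\bar P$ as the unique approximating polynomial whose free coefficients (those $\bar a_m$ with $m_n = 0$) coincide with those of $\hat P$; existence and uniqueness come from the triangular structure of \eqref{Q} highlighted in the paper. Using \eqref{c2} and the harmonicity of $y_n Q$, an induction on $m_n$ shows $|(\bar P - \hat P)_m| \leq C\delta r^{k+\alpha - |m| + 1}$, so that $\|u(\bar P - \hat P)\|_{L^\infty(\Omega \cap B_{\rho r})} \leq C\delta\,\rho\,r^{k+\alpha+2}$, which is $\leq \tfrac14(\rho r)^{k+1+\alpha}$ for $\delta$ small. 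Combining gives $\|v - u\bar P\|_{L^\infty(\Omega \cap B_{\rho r})} \leq (\rho r)^{k+1+\alpha}$, and the bound $\|P - \bar P\|_{L^\infty(B_r)} \leq Cr^{k+\alpha}$ follows since the increment is dominated by $r^{k+\alpha}Q(\cdot /r)$.
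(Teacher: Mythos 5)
Your proposal is correct and follows essentially the same route as the paper: rescale, use the approximating property of $P$ to get $|\tilde{\mathcal L}\tilde v|\le C\delta$, pass by compactness to a harmonic limit approximated by $x_nQ$ with $x_nQ$ harmonic, replace $x_n$ by $\tilde u$, and then restore the approximating property via the triangular structure of the system \eqref{Q} (the paper corrects $Q$ to $\bar Q$ with $\|Q-\bar Q\|\le C\delta$, which is the same mechanism as your coefficient-matching construction of $\bar P$). One harmless slip: the homogeneous Taylor pieces of the odd reflection are harmonic and divisible by $y_n$, but the quotient polynomial need not itself be harmonic --- what you actually use, that the truncation equals $y_nQ$ with $y_nQ$ harmonic and $\|Q\|\le C$, is correct.
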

\begin{proof} 
We write
$$v(x) = u(x)P(x) + r^{k+1+\alpha} \tilde v\left(\frac x r \right), \quad x \in \Omega \cap B_r,$$ with
$$\|\tilde v\|_{L^\infty(\tilde \Omega \cap B_1)} \leq 1, \quad \tilde \Omega := \frac 1 r \Omega.$$
Define also,
$$\tilde u(x) := \frac{u(rx)}{r}, \quad x\in \tilde \Omega \cap B_1.$$

Then
$$f = \mathcal L v = \mathcal L(uP) + r^{k+\alpha-1} \tilde{\mathcal L} \tilde v \left(\frac x r\right) \quad x\in \Omega \cap B_r$$
with
$$ \tilde{\mathcal L} \tilde v := Tr(\tilde A \, D^2 \tilde v) + r \tilde b \cdot \nabla \tilde v + r^2 \tilde c \, \tilde v,$$

$$\tilde A(x) = A(rx), \quad \tilde b (x) = b(rx), \quad \tilde c (x) = c(rx), \quad x \in \tilde \Omega \cap B_1.$$

Using that $P$ is approximating, we conclude that
\begin{equation}\label{tildeL}
\tilde{\mathcal L} \tilde v = \tilde w \quad \text{in $\tilde \Omega \cap B_1$, } \quad \tilde v=0 \quad \text{on $\p \tilde \Omega \cap B_1,$}
\end{equation}
with 
$$\|\tilde v \|_{L^\infty} \leq 1, \quad \|\tilde w\|_{C^{k-2,\alpha}} \leq C\delta.$$
By compactness $\tilde v \to v_0$ with $v_0$ harmonic. Thus we find,
$$\|\tilde v - x_n Q\|_{L^\infty(\tilde \Omega \cap B_\rho)} \leq C \rho^{k+2} \leq \frac 1 2 \rho^{k+\alpha -1}, \quad \deg Q=k, \quad \quad \|Q\| \le C,$$
with $x_n Q$ a harmonic polynomial and $\rho$ universal.
Thus,
$$\|v - u(P+r^{k+\alpha}Q(\frac x r))\|_{L^\infty(\Omega \cap B_{\rho r})} \leq \frac 1 2 (\rho r)^{k+\alpha-1}.$$
However $P+r^{k+\alpha}Q(\frac x r)$ is not approximating for $v/u$ at $0$, and we need to modify $Q$ into a slightly different polynomial $\bar Q$. 

We want the coefficients $\bar q_l$ of $\bar Q$ to satisfy (see \eqref{Q})
\begin{align}\label{eqq}
0&=(l_n+1)(l_n+2)\bar q_{l+\bar n} + \sum_{i \neq n } (l_i +1)(l_i+2)\bar q_{l+2\bar i-\bar n}+
\bar c_l^m \bar q_m, \end{align}
with (see \eqref{c1}-\eqref{c2})
$$\bar c_l^m=r^{|l|+1-m} c_l^m, \quad |\bar c_l^m| \leq C \delta.$$

Moreover, since in the flat case i.e. $A=I$, $u=x_n$ and $g$, $b$, $c$, $f$ all vanishing, $Q$ is approximating for $v_0/x_n$ at 0, the coefficients of $Q$ satisfy the system \eqref{Q} with $c_l^m=0$ and $d_l=0$, i.e.
$$0=(l_n+1)(l_n+2) q_{l+\bar n} + \sum_{i \neq n } (l_i+1)(l_i+2)q_{l+2 \bar i-\bar n}.$$
Thus, by subtracting the last two equations,  the coefficients of $Q-\bar Q$ solve the system \eqref{eqq} with left hand side bounded by $C\delta$, and we can find $\bar Q$ such that 
$$\|Q-\bar Q\|_{L^\infty(B_1)} \leq C\delta.$$
\end{proof}


\begin{thebibliography}{9999}

\bibitem[BBB]{BBB} Ba–uelos, R., Bass, R.F. and Burdzy, K.,  {\it H\"older Domains And The Boundary Harnack Principle}, Duke Math. J., 64, 195--200 (1991).
\bibitem[C]{C} Caffarelli, L., {\it The obstacle problem revisited,} Journal of Fourier Analysis and Applications, Volume 4, Issue 4-5, pp 383--402 (1998).
\bibitem[CFMS]{CFMS} Caffarelli, L., Fabes, E., Mortola, S., and Salsa, S., {\it Boundary behavior of non-negative solutions of elliptic operators in divergence form,} Indiana Math. J.,30, 621--640 (1981).
\bibitem[DS]{DS} De Silva, D., Savin, O., {\it $C^\infty$ regularity of certain thin free boundaries, }  arXiv:1402.1098. Submitted (2014).
\bibitem[F]{F} Ferrari, F., {\it On boundary behavior of harmonic functions in H\"older domains}, Journal of Fourier Analysis and Applications, 1998, Volume 4, Issue 4-5, pp 447--461 (1988).
\bibitem[HW]{HW} Hunt, R.A. and Wheeden, R.L., {\it On the boundary values of harmonic functions,} Trans. Amer. Math. Soc.,132, 307--322 (1968).
\bibitem[JK]{JK} Jerison, D.S. and Kenig, C.E., {\it  Boundary Behavior of Harmonic Functions in Non-tangentially Accessible Domains,} Adv. Math.,46, 80--147 (1982).
\bibitem[KNS]{KNS}  Kinderlehrer D., Nirenberg L., Spruck J., {\it Regularity in elliptic free boundary problems,} J. Analyse Math. 34 (1978), 86--119 (1979). 


\end{thebibliography}
\end{document}